\newtheorem{remark}{Remark}
\newtheorem{lemma}{Lemma}
\definecolor{darkgreen}{rgb}{0.0, 0.27, 0.13}
\definecolor{forestgreen}{rgb}{0.0, 0.27, 0.13}
\definecolor{shellcolor}{RGB}{205,192,176}
\definecolor{bashcolor}{RGB}{205,192,176}
\definecolor{makecolor}{RGB}{192,192,192}
\definecolor{flexcolor}{RGB}{180,205,205}
\definecolor{yacccolor}{RGB}{205,181,205}
\definecolor{termcolor}{gray}{0.80}
\definecolor{tracecolor}{gray}{0.90}
\definecolor{gcccolor}{RGB}{224,255,255}
\definecolor{algocolor}{RGB}{144,238,144}
\definecolor{pythoncolor}{RGB}{144,238,144}
\definecolor{asmcolor}{RGB}{230,230,250}
\def\N{\mathbb N}
\def\R{\mathcal R}
\def\rmc(#1,#2){RM(#1,#2)}
\def\form(#1,#2){H(#1,#2)}
\def\val(#1,#2){V(#1,#2)}
\def\boole(#1){ B(#1) }
\def\tildeboole(#1){ \widetilde B(#1) }
\def\icsboole(#1){ B^\dag(#1) }
\def\fcsboole(#1){ B^\ddag(#1) }
\def\fd{{\mathbb F}_2}
\def\fdm{{\mathbb F}^m_2}
\def\GL{{\rm GL}\,}
\def\NL{{\rm NL}}
\def\aglm{{\textsc{agl}}(m,2)}
\def\glm{{\textsc{gl}}(m,2)}
\def\aglp#1#2{\textsc{agl}(#1,#2)}
\def\aglseven{\textsc{agl}(7)}
\def\der(#1,#2){{\rm d}_{#2} #1}
\def\Der(#1,#2){{\rm Der}_{#2} #1}
\def\stab(#1){\textsc{stab}(#1)}
\def\stablevel(#1,#2){\textsc{stab}_{#1}(#2)}
\def\wt{{\rm wt}\,}
\def\cls{{\rm n}}
\def\rmq(#1,#2,#3){\rmc(#1,#2)/\rmc(#3,#2)}
\newcommand{\binomial}[2]{\genfrac{(}{)}{0pt}{}{#1}{#2}}
\def\val(#1){{\rm val}(#1)}
\def\agl#1{{\mathfrak #1}}
\def\anf(#1){{\rm anf}(#1)}
\def\fix(#1,#2,#3,#4){{{\rm fix}^{#1,#2}_{#3}}{(#4)}}
\def\classe(#1,#2,#3){{{\mathcal T}(#1,#2,#3)}}
\def\stab(#1){\textsc{stab}(#1)}
\def\stablevel(#1,#2){\textsc{stab}^{#1}(#2)}
\def\stableveldim(#1,#2,#3){\textsc{stab}^{#1}_{#2}(#3)}
\def\stableveldeg(#1,#2,#3,#4){\textsc{stab}^{#1,#2}_{#3}(#4)}
\def\level(#1){\underset{#1}{\sim}}
\def\simbm(#1){{\sim^{#1}}}
\def\simbstm(#1,#2,#3){{\sim^{#3}_{#1,#2}}}
\def\nsimbstm(#1,#2,#3){{{\not\sim}^{#3}_{#1,#2}}}
\def\restrict(#1,#2,#3){#1\vert_{{#2}_{#3}}}
\def\bound(#1,#2){\underset{#1}{\overset{#2}{\sim}}}
\def\modulo(#1,#2){\mod\rmc(#1,#2)}
\def\pow#1.#2{\tiny$10^{#1.#2}$}
\def\level(#1){\underset{#1}{=}}
\def\val(#1){{\rm val}(#1)}
\def\agl#1{{\mathfrak #1}}
\def\anf(#1){{\rm anf}(#1)}
\def\fix(#1,#2,#3,#4){{{\rm fix}^{#1,#2}_{#3}}{(#4)}}
\def\classe(#1,#2,#3){{{\mathcal T}(#1,#2,#3)}}
\def\stab(#1){\textsc{stab}(#1)}
\def\stablevel(#1,#2){\textsc{stab}^{#1}(#2)}
\def\stableveldim(#1,#2,#3){\textsc{stab}^{#1}_{#2}(#3)}
\def\stableveldeg(#1,#2,#3,#4){\textsc{stab}^{#1,#2}_{#3}(#4)}
\def\level(#1){\underset{#1}{\sim}}
\def\bound(#1,#2){\underset{#1}{\overset{#2}{\sim}}}
\def\modulo(#1,#2){\mod\rmc(#1,#2)}
\def\pow#1.#2{\tiny$10^{#1.#2}$}
\lstdefinestyle{gcc}{%
frame=single,
xleftmargin=2em,
language=c,
numberstyle=\footnotesize,
columns=[l]flexible,
tabsize=4,
showtabs=false,
backgroundcolor=\color{gcccolor},
stringstyle=\color{red}\itshape,
showstringspaces=false,
keywordstyle=\color{blue}\bfseries,
commentstyle=\color{forestgreen}\itshape,
emphstyle=\color{red}\bfseries,
emph={[2]os,sys},
emphstyle={[2]\color{pink}\bfseries},
literate={{:cup:}{$\ \cup\ $}1{:lamb:}{$\lambda$}1{:su:}{$s_u$}1{:rond:}{$\circ$}1{:in:}{$\in$}1{:=}{$\gets$}1{:notin:}{$\not\in$}1{:emptyset:}{$\emptyset$}1{:fp:}{f\texttt{'}}1{:Fp:}{F\texttt{'}}1{:fdm:}{$\fdm$}1{:basis:}{$(b_1,\ldots,b_n)$ }1{:state:}{$\forall x\in \langle b_1,\ldots,b_{i-1}\rangle,\  \widehat F(f^{\prime})\circ A^\ast(x)=\widehat F(f)(x)$}1{:finv:}{$ \widehat J(f\prime)=\widehat J(f)$}1{:A*:}{$A^\ast$}1{:derive:}{$\der(f,v)$}1{:restriction:}{$\restrict(g,E,v)$}1{:v:}{$v$}1}
}
\title{Covering radius of $\rmc(4,8)$} 
\date{April 2023}
\author[1]{Valérie Gillot}
\email{valerie.gillot@univ-tln.fr}
\author[]{Philippe Langevin}
\email{philippe.langevin@univ-tln.fr}
\address{Imath, universit\'e de Toulon}
\thanks{This work is partially supported by the French Agence Nationale de la Recherche through the SWAP project under Contract ANR-21-CE39-0012}
\begin{document}

\begin{abstract} 
	We propose an effective version of the lift by derivation, an invariant that allows us  to provide the classification of $\boole(5,6,8)=\rmq(6, 8, 4)$. The
	main consequence is to establish that the covering 
	radius of the Reed-Muller $\rmc(4,8)$ is equal to 26. .
\end{abstract}

\maketitle


\section{Boolean functions and classification}
Let $\fd$ be the finite field of order $2$. Let $m$ be a positive integer. 
We denote $\boole(m)$ the set of Boolean functions $f \colon \fdm \rightarrow\fd$. The Hamming weight of $f$ is denoted by $\wt(f)$. Every Boolean 
function  has a unique algebraic reduced representation :
$$
f(x_1, x_2, \ldots, x_m ) = f(x) = \sum_{S\subseteq \{1,2,\ldots, m\}} a_S X_S,
\quad a_S\in\fd, \ X_S( x ) = \prod_{s\in S} x_s.
$$
The degree of $f$ is the maximal cardinality of $S$ with  $a_S=1$ in
the algebraic form.  The valuation of $f\not=0$, denoted by $\val(f)$,
is the minimal cardinality of $S$ for
which $a_S=1$. Conventionnally,  $\val(0)$ is $\infty$. 
We denote by $\boole(s, t, m)$ the space of Boolean
functions of valuation greater than or equal to $s$ and of degree less 
than or equal to $t$. Note that  $\boole(s,t,m)=\{0\}$ whenever $s>t$.
The affine general linear group $\aglm$ acts naturally on the right
over Boolean functions. The action of $\agl s\in\aglm$
on a Boolean function $f$ is $f\circ \agl s$, the composition of applications. Reducing modulo the space of functions of degree less than $s$, this group also acts on $\boole(s,t,m)$. The classification of $\boole(s,t,m)$ is a prerequisite for our approach. We denote by $\tildeboole(s,t,m)$ a classification of $\boole(s,t,m)$, that is a set of orbit representatives.
The number of classes
of $\boole( s, t, m)$ is denoted by  $\cls(s, t , m)$.

\section{Covering radius of Reed-Muller codes}
A Reed-Muller code of order $k$ in $m$ variables is a code of length $2^m$, dimension $\sum_{i=0}^k \binomial m i$ and minimal distance $2^{m-k}$.  The codewords correspond to the evaluation over $\fdm$ of Boolean functions of degree less or equal to $k$, we identify the code to the space~: $$\rmc(k,m)=\{ f\in \boole(m) \mid \deg(f) \leq k\}.$$ 

The covering radius $\rho(k,m)$ of $\rmc(k,m)$ is $\rho(k,m)=\max_{f\in\boole(m)} \NL_k (f)$,
where $\NL_k(f)= \min_{g\in\rmc(k,m)} \wt(f+g)$ is the nonlinearity of order $k$ of $f \in \boole(m)$. 
Classical parameters (length, dimension and minimum distance) of Reed-Muller codes are easy to determine and they all share $\aglm$ as group of automorphisms.
The classical results on covering  radii of Reed-Muller codes are given in \cite[p. 800]{HANDBOOK}. Let us recall the simple however essential Lemma~:
\begin{lemma}\label{RADIUS}
\par\noindent
\begin{enumerate}[(i)]
 \item $2\rho(k,m-1) \leq \rho(k,m)$
 \item $\rho(k-1,m-1) \leq  \rho(k,m)$
 \item  $\rho(k,m)\leq \rho(k,m-1) + \rho(k-1,m-1)$
\end{enumerate}

\end{lemma}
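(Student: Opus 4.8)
The plan is to reduce all three inequalities to the recursive (Plotkin) structure of Reed--Muller codes and then play with the covering-radius definition $\rho(k,m)=\max_f \NL_k(f)$.

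First I would set up the decomposition. Writing $x=(x',x_m)$ with $x'\in\fd^{\,m-1}$, every $f\in\boole(m)$ is uniquely $f(x',x_m)=g(x')+x_m\,h(x')$ with $g,h\in\boole(m-1)$; in truth-table form this is $f=(f_0\mid f_1)$ with $f_0=g$ and $f_1=g+h$. Since $\deg(x_m h)=1+\deg h$, one has $\deg f\le k$ iff $\deg g\le k$ and $\deg h\le k-1$. Hence, in the concatenation model, $\rmc(k,m)=\{(u\mid u+v)\mid u\in\rmc(k,m-1),\ v\in\rmc(k-1,m-1)\}$, and for any $f=(f_0\mid f_1)$ and codeword $c=(u\mid u+v)$ one has $\wt(f+c)=\wt(f_0+u)+\wt(f_1+u+v)$. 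Everything below is bookkeeping on this single identity. Note the containment $v\in\rmc(k-1,m-1)\subseteq\rmc(k,m-1)$, which I use repeatedly.

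For (i) and (ii) I would exhibit hard functions. For (i), pick $g\in\boole(m-1)$ realizing $\NL_k(g)=\rho(k,m-1)$ and take the $x_m$-independent $f=(g\mid g)$. Then both $u$ and $u+v$ lie in $\rmc(k,m-1)$, so each of $\wt(g+u)$ and $\wt(g+u+v)$ is at least $\NL_k(g)=\rho(k,m-1)$, giving $\NL_k(f)\ge 2\rho(k,m-1)$ and hence (i). For (ii), pick $h\in\boole(m-1)$ realizing $\NL_{k-1}(h)=\rho(k-1,m-1)$ and take $f=(0\mid h)=x_m h$. Then $\wt(f+c)=\wt(u)+\wt(u+h+v)\ge\wt(u+(u+h+v))=\wt(h+v)\ge\rho(k-1,m-1)$, using the triangle inequality $\wt(a)+\wt(b)\ge\wt(a+b)$ and then $v\in\rmc(k-1,m-1)$; this yields (ii).

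For the upper bound (iii) I would argue greedily for an arbitrary $f=(f_0\mid f_1)$. First choose $u\in\rmc(k,m-1)$ with $\wt(f_0+u)\le\rho(k,m-1)$, which exists by definition of the covering radius; then, regarding $f_1+u$ as a function in $\boole(m-1)$, choose $v\in\rmc(k-1,m-1)$ with $\wt(f_1+u+v)\le\rho(k-1,m-1)$. The word $c=(u\mid u+v)$ lies in $\rmc(k,m)$ and satisfies $\wt(f+c)\le\rho(k,m-1)+\rho(k-1,m-1)$; as $f$ was arbitrary, this bounds $\rho(k,m)$ and proves (iii). No step is genuinely hard; the only places to be careful are the degree bookkeeping in the decomposition (so that the $x_m$-part costs only degree $k-1$), the order of approximation in (iii) --- killing the $f_0$-part first so the residual $f_1+u$ needs only a degree-$(k-1)$ correction --- and the direction of the triangle inequality in (ii).
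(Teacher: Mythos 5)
Your proof is correct in all three parts: the $(u\mid u+v)$ Plotkin decomposition of $\rmc(k,m)$, the choice of $(g\mid g)$ for (i), of $(0\mid h)$ with the triangle inequality $\wt(a)+\wt(b)\geq\wt(a+b)$ for (ii), and the greedy two-stage approximation for (iii) are exactly the standard argument. The paper itself offers no proof of this lemma --- it is recalled as a classical fact with a citation to the Handbook of coding theory --- so your write-up simply supplies the usual textbook derivation, and it does so without gaps.
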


However, most of covering radii are still unknown. 
Recent results are obtained in \cite{GAO,WANG} in the case $m=7$. Therefore, all the covering radii are known for $m\leq 7$. 
For $m=8$,  most the covering radii are unknown. Table \ref{M=8} is an update of Table \cite[p. 802]{HANDBOOK} with the latest results corresponding to cases $m=7, 8$.

\begin{table}[h]
	\caption{\label{M=8} Updated Table of Handbook of coding theory.
}
\begin{tabular}{ | c || c | c | c | c | c | c | c | c |}
\hline
 $k$ & 1 & 2 & 3 & 4 &5 & 6 & 7 & 8 \\
 \hline
 $\rho(k,8)$ & 120 & $88^a$ -- 96 & $50^b$ -- $67^f$ & \textbf{26}$^c$ & 10  & 2 & 1 &0 \\
 \hline 
 $\rho(k,7)$ & 56 & $40^d$ & $20^e$ & 8  & 2 & 1 &0& \\
 \hline
\end{tabular}
\end{table}
\begin{itemize}
  \item[(a)] One can check the non-linearity of order $2$ of $abd+bcf+bef+def+acg+deg+cdh+aeh+afh+bfh+efh+bgh+dgh$ is 88 ;
  \item[(b)] The lower bound is a consequency of the classification of $\boole(4,4,8)$, see \cite{GLPL}; 
  \item[(c)]  Obtained in this paper as a consequence of a lower bound found in \cite{RANDALL};
    \item[(d)] See the result in \cite[Theorem 11]{WANG} ;
     \item[(e)] See the result in \cite[Theorem 1]{GAO} ;
     \item[(f)] Consequence of Lemma \ref{RADIUS}-(iii).
 \end{itemize}

We also consider  $\rho_t(k,m)$ the relative covering radius of $\rmc(k,m)$ into $\rmc(t,m)$, 
\begin{equation}
\label{RHO}\rho_t(k,m)=\max_{f\in\rmc(t,m)} \NL_k (f)= \max_{f\in \boole(k+1,t,m)} \NL_{k} (f)
\end{equation}

In the paper \cite{RANDALL}, the authors present methods for computing the distance from a Boolean function in $\boole(m)$ of degree $m-3$ to the Reed-Muller space $\rmc(m-4,m)$. It is useful to determine the relative covering radius $\rho_{m-3}(m-4,m)$. In particular, their result $\rho_5(4,8)=26$ is a milestone for our purpose : computation of $\rho(4,8)$. It is necessary to determine $\rho_6(4,8)$, but considering the formula (\ref{RHO}) the cardinality of $\boole(5,6,8)=2^{84}$ is too large, using a set of representatives of $\boole(5,6,8)$ 
$$\rho_6(4,8) = \max_{f\in \tildeboole(5,6,8)} \NL_{4} (f).$$
Hence, the search space is reduced to the $20748$ Boolean functions.

Our strategy for determining the covering radius $\rho(4,8)$ is described in figure \ref{FIG}. It consists in two parts. A first part dedicated to the tools which allow to obtain the classification of $\boole(5,6,8)$ : cover set, invariant and equivalence. A second part is dedicated to the estimation of the 4th order nonlinearity of element in $\tildeboole(5,6,8)$.

\section{Cover set and classification}
Given a set of orbit representatives $\tildeboole(s,t,m) $ of $\boole(s,t,m)$ under the action of $\aglm$, we determine $\rho_t(s-1,m)$ :
 $$\rho_t(s-1,m) = \max_{f \in\boole(s,t,m)} \NL_{s-1} (f)= \max_{f\in \tildeboole(s,t,m)} \NL_{s-1} (f).$$
In general, the determination of a $\tildeboole(s,t,m)$ is hard computational task. So, we introduce an intermediate concept, a cover set of $\boole(s,t,m)$ is a set containing $\tildeboole(s,t,m)$ and eventually other functions of $\boole(s,t,m)$. In order to obtain a classification from a cover set, we will need a process to eliminate functions in same orbit. In the first instance, we construct a cover set with reasonable size in two reduction steps applied to $\boole(s,t,m)$.
Any Boolean function $f\in\boole(m)$ can be written as $ x_m g + h$ with $g,h\in \boole(m-1)$. 
In particular, 
\begin{equation} \label{DECOMP}
 \boole(s,t,m) = \big\lbrace x_m g + h \mid g\in \boole(s-1,t-1,m-1),\ h\in \boole(s,t,m-1)\big\rbrace.
\end{equation}
\begin{lemma}[Initial cover set]\label{FIRST}
The set
 \begin{equation} \label{CS}
 \icsboole(s,t,m)=\{  x_m g + h  \mid  g\in \tildeboole(s-1,t-1,m-1), h\in \boole(s,t,m-1)\}
\end{equation}
is a cover set of $\boole(s,t,m)$ of size $\sharp \tildeboole(s-1,t-1,m-1) \times \sharp \boole(s,t,m-1)$.
\end{lemma}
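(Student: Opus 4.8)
The plan is to establish two independent claims: that $\icsboole(s,t,m)$ is a cover set, i.e. it meets every $\aglm$-orbit of $\boole(s,t,m)$, and that its cardinality equals the stated product. The second is the easy one and I would dispatch it first.

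For the size, I start from the unique decomposition $f = x_m g + h$ with $g,h\in\boole(m-1)$, obtained by splitting the algebraic normal form of $f$ into the monomials that contain $x_m$ and those that do not. This is a bijection $\boole(m-1)\times\boole(m-1)\to\boole(m)$, since $g$ and $h$ are read off unambiguously from $f$. In particular its restriction to $\tildeboole(s-1,t-1,m-1)\times\boole(s,t,m-1)$ is injective, so $\sharp\,\icsboole(s,t,m)=\sharp\,\tildeboole(s-1,t-1,m-1)\times\sharp\,\boole(s,t,m-1)$, as announced. That the decomposition indeed carries $\boole(s,t,m)$ onto pairs with $g\in\boole(s-1,t-1,m-1)$ and $h\in\boole(s,t,m-1)$ is exactly (\ref{DECOMP}), using that multiplication by $x_m$ raises both valuation and degree by one.

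For the covering property, the key step is to isolate inside $\aglm$ a subgroup acting only on the $g$-component. Given $A\in\aglp{m-1}{2}$, lift it to $\widetilde A\in\aglm$ that acts as $A$ on $(x_1,\dots,x_{m-1})$ and fixes $x_m$. Then $f\circ\widetilde A = x_m\,(g\circ A)+(h\circ A)$ because $x_m$ is left untouched. Passing to the reduced action $\act$ on $\boole(s,t,m)$, that is discarding monomials of degree $<s$, the two summands never interfere, the first producing only monomials containing $x_m$ and the second only monomials avoiding it; and deleting from $x_m(g\circ A)$ the monomials of degree $<s$ is the same as deleting from $g\circ A$ the monomials of degree $<s-1$. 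Hence $\widetilde A$ sends the pair $(g,h)$ to $(g\act A,\,h\act A)$, where $g\act A$ is precisely the image of $g$ under the action on $\boole(s-1,t-1,m-1)$. The cover property then follows: given $f=x_m g+h\in\boole(s,t,m)$, pick $A$ carrying $g$ to its representative in $\tildeboole(s-1,t-1,m-1)$; the reduced image of $f$ under $\widetilde A$ is $x_m(g\act A)+(h\act A)\in\icsboole(s,t,m)$, so every orbit is represented in $\icsboole(s,t,m)$ and one may take $\tildeboole(s,t,m)\subseteq\icsboole(s,t,m)$.

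I expect the only delicate point to be the verification in the third paragraph that the truncation defining the reduced action commutes with the decomposition, concretely that cutting degree $<s$ after multiplying by $x_m$ matches the $\boole(s-1,t-1,m-1)$-action on $g$, which cuts degree $<s-1$. No genuine estimate is involved; the argument is purely structural.
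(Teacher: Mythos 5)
Your argument is correct and follows exactly the paper's route: the paper's entire proof is the one-line observation that an element $\agl s\in\aglp{m-1}{2}$ acts on $f=x_m g+h$ by $x_m(g\circ\agl s)+h\circ\agl s$, which is precisely the lifting-and-reduction step you carry out (and justify) in your third paragraph. You simply make explicit the two points the paper leaves tacit, namely the injectivity of the decomposition for the cardinality count and the compatibility of the degree-$<s$ truncation with the $\boole(s-1,t-1,m-1)$-action on $g$.
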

\begin{proof}
 An element $\agl s\in \aglp{m-1}2$ acts on $f$ by $x_m g\circ \agl s + h\circ \agl s$.
\end{proof}

\begin{lemma}[Action of stabilizer]\label{COVER}

 Let us fix $g \in \tildeboole(s-1,t-1,m-1)$. 
 \begin{enumerate}
  \item For all $\agl s\in \aglp{m-1}{2}$  in the stabilizer of $g$, the functions $x_m g + h$ and $x_m g + h\circ \agl s$ are in the same orbit.
  \item For all $\alpha \in \rmc(1,m-1)$, the functions $x_m g + h$ and $x_m g + h + \alpha g$ are in the same orbit.
 \end{enumerate}
where orbits correspond to the action of $\aglm$ on $\boole(s,t,m)$.
 
\end{lemma}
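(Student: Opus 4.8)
The plan is to prove each of the two statements by exhibiting an explicit element of $\aglm$ that carries the representative $x_m g + h$ to the stated function, using the decomposition (\ref{DECOMP}). In both cases the chosen transformation will fix the first $m-1$ coordinates or the coordinate $x_m$ in a way that leaves the ``top'' term $x_m g$ stable, so that its effect on $\boole(s,t,m)$ amounts to a direct substitution followed by the reduction modulo degree $<s$ that defines the action. I will therefore treat $g,h$ as polynomials in $x_1,\ldots,x_{m-1}$ throughout.

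For part (1), suppose $\agl s\in\aglp{m-1}{2}$, written $x\mapsto Ax+b$ on $\fd^{m-1}$, stabilizes $g$. First I would lift it to $\widehat{\agl s}\in\aglm$ acting by $(x_1,\ldots,x_{m-1},x_m)\mapsto(A(x_1,\ldots,x_{m-1})+b,\,x_m)$; this is affine and invertible since it restricts to $\agl s$ on the first $m-1$ variables and fixes $x_m$. Composing gives $(x_m g + h)\circ\widehat{\agl s}=x_m\,(g\circ\agl s)+(h\circ\agl s)$. The stabilizer condition, read in $\boole(s-1,t-1,m-1)$ (hence modulo degree $<s-1$), means $g\circ\agl s=g+r$ with $\deg r<s-1$, so $x_m(g\circ\agl s)=x_m g + x_m r$ where $\deg(x_m r)\le 1+\deg r<s$. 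Reducing modulo degree $<s$, the term $x_m r$ vanishes and the image equals $x_m g + h\circ\agl s$ in $\boole(s,t,m)$, which is the claim.

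For part (2), since $\alpha\in\rmc(1,m-1)$ is affine, I would use the shear $\tau\colon(x_1,\ldots,x_{m-1},x_m)\mapsto(x_1,\ldots,x_{m-1},\,x_m+\alpha)$, which is an affine transformation of $\fdm$, hence $\tau\in\aglm$ (indeed an involution, as $\tau^2=\mathrm{id}$ in characteristic $2$). Because $g$ and $h$ involve only $x_1,\ldots,x_{m-1}$, they are unchanged by $\tau$, and substituting $x_m\mapsto x_m+\alpha$ yields $(x_m g + h)\circ\tau=(x_m+\alpha)g+h=x_m g + h + \alpha g$. After reduction modulo degree $<s$ this is exactly the stated representative, so the two functions again share an $\aglm$-orbit.

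The computations themselves are short substitutions; the step requiring the most care is the degree bookkeeping. I must verify that each constructed map preserves $\boole(s,t,m)$ modulo degree $<s$, and, most delicately in part (1), that multiplying the degree-$(<s-1)$ remainder $r$ by $x_m$ still produces a term of degree $<s$ so that it is annihilated in the quotient defining the action. Confirming this precise degree count — together with checking that the stabilizer hypothesis is interpreted modulo degree $<s-1$ rather than as an exact equality — is the only real obstacle.
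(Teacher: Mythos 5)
Your proof is correct. The paper actually states Lemma~\ref{COVER} without giving a proof, but your argument supplies exactly the intended one: part (1) uses the same embedding of $\aglp{m-1}{2}$ into $\aglm$ (fixing $x_m$) that the paper invokes in the proof of Lemma~\ref{FIRST}, with the right degree bookkeeping showing that the remainder $x_m r$ (of degree at most $s-1$) dies modulo $\rmc(s-1,m)$; and part (2) uses the natural shear $x_m\mapsto x_m+\alpha$, which lies in $\aglm$ precisely because $\alpha\in\rmc(1,m-1)$ is affine.
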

\begin{lemma}[Second cover set]\label{SECOND} The set
\begin{equation}\label{CS2}
 \fcsboole(s,t,m)=\bigsqcup_{g\in \tildeboole(s-1,t-1,m-1)}\big\lbrace\ x_m g + h \mid h \in \R(g)\ \big\rbrace. 
\end{equation}
is a cover set of size $\sharp\fcsboole(s,t,m)= \sum_{g\in\tildeboole(s-1,t-1,m-1)}  \sharp\R(g)$.
Denoting by $\R(g)$ an orbit representatives set for  the action over $\boole(s,t,m-1)$ of the group spaned  by the transformations $h \mapsto h\circ \agl s$ and $h \mapsto h+ \alpha g $. 
\end{lemma}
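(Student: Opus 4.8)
The plan is to check the two assertions separately: that $\fcsboole(s,t,m)$ meets every $\aglm$-orbit of $\boole(s,t,m)$ (this is what being a cover set requires, and it suffices for our purposes because $\NL_{s-1}$ is constant on orbits), and that its cardinality is $\sum_{g}\sharp\R(g)$. For the first, I would start from Lemma~\ref{FIRST}: any $f\in\boole(s,t,m)$ has in its orbit an element $x_m g + h$ with $g\in\tildeboole(s-1,t-1,m-1)$ and $h\in\boole(s,t,m-1)$. It then remains, for each fixed $g$, to prune the slice $\{x_m g + h\mid h\in\boole(s,t,m-1)\}$ down to one function per $\R(g)$-class without leaving the $\aglm$-orbit.

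Fix $g\in\tildeboole(s-1,t-1,m-1)$ and let $G_g$ be the group of transformations of $\boole(s,t,m-1)$ generated by $h\mapsto h\circ\agl s$ for $\agl s\in\aglp{m-1}{2}$ stabilising $g$, and by $h\mapsto h+\alpha g$ for $\alpha\in\rmc(1,m-1)$. A preliminary check is that $G_g$ genuinely acts on $\boole(s,t,m-1)$, read as $\rmc(t,m-1)$ modulo degree $<s$: composition with $\agl s$ preserves the degree bound $\le t$ and the space of functions of degree $<s$, hence descends to this quotient, while $\alpha g$ has degree $\le t$, so $h\mapsto h+\alpha g$ also respects $\boole(s,t,m-1)$; both generators are invertible. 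By Lemma~\ref{COVER} each generator fixes the $\aglm$-orbit of $x_m g + h$, so every element of $G_g$ does too; hence $x_m g + h$ and $x_m g + h'$ lie in the same $\aglm$-orbit whenever $h$ and $h'$ are $G_g$-equivalent. Replacing the $h$ furnished by Lemma~\ref{FIRST} with the representative $h'\in\R(g)$ of its $G_g$-class therefore places an element of $\fcsboole(s,t,m)$ in the orbit of $f$, proving the cover property.

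For the cardinality I would invoke the uniqueness of the decomposition $f=x_m g + h$ with $g\in\boole(s-1,t-1,m-1)$ and $h\in\boole(s,t,m-1)$: gathering in the algebraic form the monomials containing $x_m$ recovers $g$, and the remainder recovers $h$. Hence the blocks indexed by distinct $g$ are pairwise disjoint — justifying the notation $\bigsqcup$ — and within each block the $\sharp\R(g)$ representatives give as many distinct functions, so summation yields $\sharp\fcsboole(s,t,m)=\sum_{g}\sharp\R(g)$. The whole argument is short given Lemmas~\ref{FIRST} and~\ref{COVER}; the only point needing care is the preliminary well-definedness check, namely that passing to representatives of $G_g$-orbits stays inside $\boole(s,t,m-1)$ after reduction modulo degree $<s$, which is the modest obstacle here.
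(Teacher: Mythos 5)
Your proposal is correct and follows exactly the paper's route: the paper's own proof is the one-line instruction ``for each $g \in \tildeboole(s-1,t-1,m-1)$ apply Lemma~\ref{COVER} to the cover set of Lemma~\ref{FIRST}'', which is precisely the argument you spell out (your additional checks on well-definedness of the $G_g$-action and the disjointness of the blocks via uniqueness of the decomposition $f = x_m g + h$ are left implicit in the paper but are the right details to supply).
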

\begin{proof}
For each $g \in \tildeboole(s-1,t-1,m-1)$ apply Lemma \ref{COVER} to the cover set (\ref{CS}).
\end{proof}

In order to determine $\rho_6(4,8)$,   the initial cover is $\icsboole(5,6,8)=\tildeboole(4,5,7) \times \boole(5,6,7)$. The classification $\tildeboole(4,5,7)$ is obtained in \cite{GLPL},  its cardinality is 179, whence $\sharp\icsboole(5,6,8)$  is $179 \times 2^{28} \approx 2^{35.5}$.  

Applying Lemma \ref{SECOND}, we obtain a cover set of size $3828171\approx 2^{21.9}$. It is already known that $\sharp\tildeboole(5,6,8)=20748$, the determination of an orbit representatives set is the subject of the next sections. Our approach is based on invariant tools and equivalence algorithm.

\section{Invariant}

From the result of the previous section in the case $\boole(5,6,8)$, we have to extract $20748$ orbit representatives among    $3828171$ functions. 
Two elements $f, f'\in\boole(s,t,m)$ in the same orbit under the action of $\aglm$ are said equivalent, we denote $f \simbstm(s,t,m) f'$, that means that there exists $\agl s \in \aglm$ such that $f' \equiv f\circ \agl s \mod \rmc(s-1,m)$.
An invariant $j : \boole(s,t,m) \rightarrow X$, for an arbitrary set $X$, satisfies  $f\simbstm(s,t,m) f' \Longrightarrow j(f)=j(f')$. If $j(f)=j(f')$ and $f\nsimbstm(s,t,m) f'$, we say there is a collision.

Let us recall the derivative $\der(f,v)$ of a Boolean function $f$ in the direction $v$ is the application defined by $\fdm \ni x \mapsto \der(f,v)(x)= f(x+v)+ f(x)$.   In the specific case $f\in \boole(s,t,m)$, we define the derivative as $$\Der(f,v)\equiv\der(f,v)\mod \rmc(s-2,m).$$  This derivative is an element of $\boole(s-1,t-1,m)$ and we consider the following map~:
\begin{align*}
F \colon \boole(s,t,m) & \longrightarrow  \tildeboole(s-1,t-1,m)^{\fdm}\\
f & \xmapsto{\phantom{\longrightarrow}} \widetilde{\Der(f,.)},\\
\end{align*}

\begin{lemma}\label{FoA} 
Let be $f\in \boole(m)$, $\agl s \in \aglm$.
Considering the linear part $A\in \glm$ and $a\in \fdm$ the affine part of $\agl s=(A,a)$, $\agl s (x)=A(x)+a$, we have $F(f\circ \agl s)=F(f)\circ A$.
\end{lemma}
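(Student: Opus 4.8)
The plan is to reduce the statement to the chain rule for discrete derivatives and then exploit that precomposition by an element of $\aglm$ preserves orbits. Throughout I take $f\in\boole(s,t,m)$ so that $F(f)$ is defined, and I verify the asserted identity of maps $\fdm\to\tildeboole(s-1,t-1,m)$ pointwise: for each direction $v\in\fdm$ one has $(F(f)\circ A)(v)=F(f)(A(v))=\widetilde{{\rm Der}_{A(v)} f}$, so it suffices to prove $\widetilde{{\rm Der}_v(f\circ\agl s)}=\widetilde{{\rm Der}_{A(v)} f}$.

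First I would compute the ordinary derivative of the composite. Using $\agl s(x)=A(x)+a$ and the linearity of $A$, one gets $\agl s(x+v)=\agl s(x)+A(v)$, whence
\[
{\rm d}_v(f\circ\agl s)(x)=f(\agl s(x)+A(v))+f(\agl s(x))=\big({\rm d}_{A(v)} f\big)(\agl s(x)).
\]
This yields the chain-rule identity ${\rm d}_v(f\circ\agl s)=({\rm d}_{A(v)} f)\circ\agl s$: differentiating the composite in direction $v$ is the same as differentiating $f$ in the transported direction $A(v)$ and then precomposing with $\agl s$. Note that the affine part $a$ disappears from the direction and survives only inside the composition, which explains why the final formula involves the linear part $A$ alone.

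Next I would pass to the quotient modulo $\rmc(s-2,m)$ that defines ${\rm Der}$. The point to check carefully is that reduction modulo $\rmc(s-2,m)$ commutes with precomposition by $\agl s$: since $\agl s\in\aglm$ is an invertible affine map it preserves the degree filtration, so two functions congruent modulo $\rmc(s-2,m)$ stay congruent after precomposition by $\agl s$. Combining this compatibility with the chain-rule identity gives ${\rm Der}_v(f\circ\agl s)=({\rm Der}_{A(v)} f)\circ\agl s$ as elements of $\boole(s-1,t-1,m)$.

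Finally, because $\agl s\in\aglm$ acts on $\boole(s-1,t-1,m)$, the functions $({\rm Der}_{A(v)} f)\circ\agl s$ and ${\rm Der}_{A(v)} f$ lie in the same $\aglm$-orbit and therefore share the same representative, so $\widetilde{({\rm Der}_{A(v)} f)\circ\agl s}=\widetilde{{\rm Der}_{A(v)} f}$. Stringing the three steps together yields $\widetilde{{\rm Der}_v(f\circ\agl s)}=\widetilde{{\rm Der}_{A(v)} f}$ for every $v$, which is precisely $F(f\circ\agl s)=F(f)\circ A$. I expect the only genuine subtlety to be the commutation in the middle step, namely making explicit that passing to the derivative (reducing mod $\rmc(s-2,m)$) is compatible with the $\aglm$-action, whereas the chain-rule identity and the orbit-invariance are both immediate.
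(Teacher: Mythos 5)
Your proposal is correct and follows essentially the same route as the paper's proof: the chain-rule computation ${\rm d}_v(f\circ\agl s)=({\rm d}_{A(v)}f)\circ\agl s$, reduction modulo $\rmc(s-2,m)$, and the observation that $({\rm Der}_{A(v)}f)\circ\agl s$ and ${\rm Der}_{A(v)}f$ share the same orbit representative. You merely spell out more explicitly the compatibility of the reduction with the $\aglm$-action, which the paper leaves implicit.
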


\begin{proof} Note that $\agl s(x+y)=A(x+y)+a=\agl s(x)+A(y)$. For $x,v\in \fdm$, $f\in \boole(m)$
 \begin{align*}
  \der((f\circ \agl s),v)(x)&= f\circ \agl s(x+v) + f\circ \agl s (x)\\
  &=f(\agl s(x)+A(v)) + f\circ \agl s (x)\\
  &=(\der(f,{A(v)}))\circ \agl s(x)
 \end{align*}
Reducing modulo $\rmc(s-2,m)$, we have $\Der((f\circ \agl s),v)\equiv (\Der(f,{A(v)}))\circ \agl s$, therefore $\widetilde{\Der((f\circ \agl s),v)}=\widetilde{\Der(f,{A(v)})}$, whence $F(f\circ \agl s)= F(f)\circ A$.
\end{proof}
\begin{lemma}[Invariant]\label{INVARIANT} 
The application $J$ mapping $f\in \boole(s,t,m)$ to the distribution of the values of $F(f)(v)$, for all $v\in \fdm$, is an invariant. 
\end{lemma}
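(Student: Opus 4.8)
The plan is to derive the invariance of $J$ from Lemma~\ref{FoA}, once I have checked that $F$, and hence $J$, ignores the degree-$(s-1)$ ambiguity built into the relation $\simbstm(s,t,m)$. Concretely, I would first observe that $F$ is constant on cosets modulo $\rmc(s-1,m)$: if $f'=f+r$ with $\deg r\le s-1$, then for each direction $v\in\fdm$ one has $\der(f',v)=\der(f,v)+\der(r,v)$, and since differentiation lowers the degree by one, $\der(r,v)\in\rmc(s-2,m)$. Reducing modulo $\rmc(s-2,m)$ gives $\Der(f',v)=\Der(f,v)$, whence $F(f')(v)=F(f)(v)$ for every $v$, i.e. $F(f')=F(f)$ whenever $f'\equiv f\modulo(s-1,m)$. (Here $F$ is read on all of $\boole(m)$ via the same formula, as in Lemma~\ref{FoA}.)

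Next, suppose $f\simbstm(s,t,m) f'$ and choose $\agl s=(A,a)\in\aglm$ with $f'\equiv f\circ\agl s\modulo(s-1,m)$. By the previous step $F(f')=F(f\circ\agl s)$, and Lemma~\ref{FoA} gives $F(f\circ\agl s)=F(f)\circ A$, so $F(f')(v)=F(f)(A(v))$ for all $v\in\fdm$. The decisive remark is now elementary: since $A\in\glm$ is a bijection of $\fdm$, the assignment $v\mapsto A(v)$ merely permutes the index set, so the family $\bigl(F(f')(v)\bigr)_{v\in\fdm}$ is a reindexing of $\bigl(F(f)(v)\bigr)_{v\in\fdm}$. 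Taking the distribution of the values destroys this reindexing, which yields $J(f')=J(f)$ and establishes the implication $f\simbstm(s,t,m) f'\Rightarrow J(f)=J(f')$ defining an invariant.

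I do not expect a real obstacle, since the substantive identity was already proved in Lemma~\ref{FoA}. The only point requiring care is the bookkeeping of the two reduction levels in play — degree $s-1$ in the equivalence $\simbstm(s,t,m)$ versus degree $s-2$ in the definition of $\Der$ — and the check that they are compatible, which is exactly the first step above. Once that compatibility is granted, the invariance of the distribution reduces to the observation that precomposition by the bijection $A$ only permutes the directions $v$, leaving the multiset of values unchanged.
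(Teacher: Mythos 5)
Your proposal is correct and follows the same route as the paper: the paper's proof simply invokes Lemma~\ref{FoA} to get $F(f')=F(f)\circ A$ and leaves the rest implicit. You usefully make explicit two points the paper glosses over — that $F$ is well defined on cosets modulo $\rmc(s-1,m)$ because differentiation drops the degree of the ambiguity into $\rmc(s-2,m)$, and that precomposition by the bijection $A$ only permutes the directions and hence preserves the value distribution.
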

\begin{proof} Let consider $f,f'\in \boole(s,t,m)$, $\agl s \in \aglm$,  such that $f' \equiv f\circ \agl s \mod \rmc(s-1,m)$ (i.e. $f\simbstm(s,t,m) f'$).
Applying Lemma \ref{FoA}, we obtain $F(f')=F(f)\circ A$.
\end{proof}

Let us observe the derivative of $f\in\rmc(t,m)$ in the direction $e_m$, using the  decomposition of $f$ as in (\ref{DECOMP}),  for $(y,y_m)\in \fd^{m-1}\times\fd $ and $e_m=(0,1)\in \fd^{m-1}\times\fd $,  we obtain :
\begin{align*}
\der(f,e_m) (y,y_m)&= f((y,y_m)+(0,1))+f(y,y_m)\\
&= x_m(y,y_m+1)g(y)+ x_m(y,y_m)g(y)+ h(y) + h(y)\\
&= (y_{m}+1) g(y) + y_m g(y)\\
&= g(y)\\
\end{align*}
It is nothing but the partial derivative with respect to $x_m$. Hence,  $g$ is a Boolean function in $m-1$ variables of degree less or equal to $t-1$. This fact holds in general for a derivation in any direction $v$. 
A Boolean function $f\in \boole(m)$ is $v$-periodic iff $ f(x+v)=f(x),\forall x\in \fdm$. The $v$-perodic Boolean functions are invariant under the action of any transvection  $T\in \glm$ of type $T(x)=x+\theta(x)v$, where $v$ is in the kernel of the linear form $\theta$.

For any supplementary $E_v$ of $v$, the restriction $\restrict(f,E,v)$ of a $v$-periodic function $f\in\boole(m)$ is a function in $m-1$ variables.
Note that for $f\in \boole(s,t,m)$, $\Der(f,v)$ is $v$-periodic whoose its restriction  to $E_v$ is a Boolean function in $m-1$ variables of degree less or equal to $t-1$.
\begin{lemma}\label{RESTRICTION} Let be $f, g\in\boole(m)$ two $v$-perodic Boolean functions. If $f$ is equivalent to $g$ in $\boole(m)$ then $\restrict(f,E,v) $ is equivalent to $\restrict(g,E,v)$  in $\boole(m-1)$, for any supplementary $E_v$ of $v$.
\end{lemma}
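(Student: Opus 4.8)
The statement asserts that taking the restriction of a $v$-periodic function to a complement of $\langle v\rangle$ respects equivalence. The plan is to reduce the claim to a change of coordinates that makes $v$ the last basis vector, so that restriction to $E_v$ becomes literally "forget the last variable." Concretely, since $f$ is equivalent to $g$ in $\boole(m)$, by definition there is $\agl s=(A,a)\in\aglm$ with $g=f\circ\agl s$. I would first record the key compatibility: if $f$ is $v$-periodic, then $f\circ\agl s$ is $w$-periodic for $w=A^{-1}(v)$, because $f\circ\agl s(x+w)=f(A(x)+A(w)+a)=f(\agl s(x)+v)=f(\agl s(x))=f\circ\agl s(x)$ using the affine identity $\agl s(x+w)=\agl s(x)+A(w)$ already exploited in Lemma~\ref{FoA}.

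Next I would handle the subtlety that the hypothesis gives $g$ as $v$-periodic, but the computation above produces $g=f\circ\agl s$ as $w$-periodic with $w=A^{-1}(v)$, which need not equal $v$. The clean way around this is to prove first a \emph{well-definedness} statement that I expect to be the real content: for a fixed $v$-periodic function, its restrictions to two different complements $E_v$ and $E_v'$ of $\langle v\rangle$ are affinely equivalent in $\boole(m-1)$. This follows because any two complements are related by a transvection of the form $x\mapsto x+\theta(x)v$ fixing $v$, under which $f$ is invariant (as noted just before the statement), so the two restrictions differ only by the induced coordinate change on the complement. Granting this, $\restrict(g,E,v)$ is independent (up to equivalence) of the chosen complement, and likewise for the direction, so I may assume the convenient coordinates.

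With the coordinate freedom in hand, I would choose a basis in which $w=A^{-1}(v)$ is the last vector $e_m$ for $f\circ\agl s$ and $v=e_m$ for $f$; then $A$ carries $\langle e_m\rangle$ to $\langle v\rangle$ and hence induces a well-defined affine map $\bar A$ on the quotient $\fdm/\langle v\rangle\cong\fd^{m-1}$, identified with the complements. Restricting the identity $g=f\circ\agl s$ to the last-coordinate-free slice yields $\restrict(g,E,v)=\restrict(f,E,v)\circ\bar A$, which is exactly the asserted equivalence in $\boole(m-1)$. The main obstacle is purely bookkeeping: tracking how $\agl s$ descends to the complements and checking that the descended map $\bar A$ lies in $\aglp{m-1}{2}$; the periodicity guarantees the restriction does not depend on which representative of each coset is chosen, which is what makes $\bar A$ well defined.
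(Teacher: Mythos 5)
Your proposal is essentially correct but follows a genuinely different route from the paper. The paper never passes to quotient spaces: it fixes the complement $E_v$ once and for all, writes $g(x)=g(T(x))=f(AT(x))=f(PAT(x))$ where $P$ is the projection onto $E_v$ along $v$ and $T(x)=x+\theta(x)v$ is a transvection, and then chooses the single value $\theta(A^{-1}(v))$ (by a two-case analysis according to whether $v\in A(E_v)$) so that $PAT$ restricted to $E_v$ is injective; the equivalence map on $E_v$ is thus built explicitly. Your version instead descends everything to $\fdm/\langle v\rangle$ and $\fdm/\langle A^{-1}(v)\rangle$, which makes the induced map automatically invertible and avoids the case analysis, at the price of two well-definedness lemmas. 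That trade-off is reasonable and arguably more conceptual.

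The one step you should not wave through is ``likewise for the direction.'' Independence of the complement really is the transvection argument you sketch (two complements of $\langle v\rangle$ are related by $x\mapsto x+\theta(x)v$, which fixes $f$). But comparing the restriction of $g$ along $v$ with its restriction along $w=A^{-1}(v)$ is not the same argument verbatim: you need a transvection $T(x)=x+\theta(x)(v+w)$ with $\theta(w)=1$ and $\theta(v+w)=0$, which sends $w$ to $v$ and fixes $g$ because $g$, being both $v$- and $w$-periodic, is $(v+w)$-periodic. That extra observation (joint periodicity in the whole plane $\langle v,w\rangle$, and the degenerate check when $v=w$) is the actual content hiding behind ``likewise,'' and it is exactly the point where your reduction uses the hypothesis that $g$ itself is $v$-periodic rather than merely $A^{-1}(v)$-periodic. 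Once that is written out, your argument closes; the descent of the affine part $a$ to a translation on the quotient is indeed routine, matching the paper's remark that the translation case is immediate.
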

\begin{proof}
If $f$ and $g$ are equivalent in $\boole(m)$, there exists $\agl s=(A,a)$ such that $f\circ\agl s =g$. The
case of a translation is immediate. We may assume $a=0$ that is the action of the linear part $A$, $f\circ A=g$.
Since $g$ is $v$-perodic, $g$ is fixed by any transvection $T=x+\theta(x)v$ where $v$ is in the kernel of the linear form $\theta$~:
$$\forall x \in \fdm, \quad g(T(x))=g(x+\theta(x)v)=g(x)$$
We denote $P$ the projection  of $\fdm$ over $E_v$ in the direction of $v$
	($P(e+v)=e$),
	$$\forall x\in\fdm,\quad g(x)=g(T(x))=f(AT(x))=f(PAT(x)).$$

Note that  $AT(x)=A(x)+\theta(x)A(v)$. We are going to determine $\theta(A^{-1}(v))$ 
so that $\ker PAT \cap E_v=\{0\}$. That means for  $x\in E_v\setminus \{0\}$, $AT(x)\not\in \{0,v\}$.
Let $x\in\fdm$ such that $AT(x)=\lambda v$ with $\lambda \in \fd$.
 \begin{align*}
  & A(x)+\theta(x)A(v)=\lambda v\\
  & x + \theta(x)v=\lambda A^{-1}( v)\\
   &\theta(x)+\theta(x)\theta(v)=\lambda\theta( A^{-1}(v))\quad \theta(x)= \lambda\theta( A^{-1}(v))\\
  &x= \lambda (  A^{-1}( v) + \theta( A^{-1}(v)) v)\\
  \end{align*}

\noindent There are two cases to be considered~:
\begin{itemize}
 \item $v\in A(E_v)$ : $A^{-1}(v)\not =v$,  we can fix $\theta(A^{-1}(v))=1$. Thus, $x=\lambda (A^{-1}(v) + v)$.
$$
   x=\lambda (A^{-1}(v) + v)\quad\lambda=\begin{cases}
    0 \text{, $x=0$}\\
    1 \text{, $x\not\in E_v$}
      \end{cases}$$
 \item $v\not\in A(E_v)$ :  $A^{-1}(v)\not\in E_v$, we can fix $\theta(A^{-1}(v))=0$. Thus $x =\lambda A^{-1}(v)$, we obtain  $x=0$ for $\lambda =0$  and $x\not\in E_v$ for $\lambda =1$
$$
  x =\lambda A^{-1}( v)\quad
   \lambda=\begin{cases}
    0 \text{, $x=0$}\\
    1 \text{, $x\not\in E_v$}
      \end{cases}$$

\end{itemize}
 In these two cases, we obtain  $x=0$ for $\lambda =0$  and $x\not\in E_v$ for $\lambda =1$.
Hence, the restriction of $PAT$ to $E_v$ is an automorphism, thus, $\restrict(f,E,v) $ 
is equivalent to $\restrict(g,E,v)$  in $\boole(m-1)$. 
\end{proof}

By numbering the elements of $\tildeboole(s-1,t-1,m)$, $F(f)$ takes its values in $\N$. We can consider its Fourier transform  $\widehat F (f)(b)=\sum_{v\in \fdm} F(f)(v) (-1)^{b.v}$. For $A\in \GL(m)$, the relation $F(f')=F(f)\circ A$ becomes $\widehat F(f')\circ A^\ast=\widehat F(f)$, $A^\ast$ is the adjoint of $A$. We denote by $J$ the invariant corresponding to the values distribution  of $F(f)$ and $\widehat J$ the invariant corresponding the values distribution of $\widehat F(f)$.
These invariants $J$ and $\widehat J$ were introduced in \cite{BRIER}. In our context the invariant $\widehat J$ is more discriminating than $J$. The application of Lemma \ref{RESTRICTION} allows us to consider the derivatives functions in $\boole(s-1,t-1,m-1)$ instead of $\boole(s-1,t-1,m)$.

\begin{remark} To make the algorithm \texttt{Invariant}, we need to optimise the class determination of an element of $\boole(4,5,7)$. There is only $4$ classes in $\tildeboole(5,5,7)$.
We precompute the complete classification of $\boole(5,5,7)$ by determining a representatives set $\{r_1,r_2,r_3,r_4\}$ of  $\tildeboole(5,5,7)$, stabilizers  $\{S_1,S_2,S_3,S_4\}$ of each representative and a transversale.  For each stabilizer, we keep in memory a description of the orbits of $\boole(4,4,7)$ under the stabilizer $S_i$. The class of an element $h \in \boole(4,5,7)$ is obtained from a representative $r_i\simbstm(5,5,7) h$ and a transversale element $\agl s \in \aglseven$  such that $h \circ \agl s \equiv r_i \mod \rmc(4,7)$ using a lookup table for the key $h\circ \agl s + r_i$.\\
There is $179$ classes dans $\tildeboole(4,5,7)$. The amount of memory to store this data is about 32 GB.
\end{remark}

\begin{lstlisting}[float=th, style=gcc, xleftmargin={20mm},caption={Invariant},linewidth={110mm}, emph={pop,push,empty}]
Algorithm Invariant( f, s, t, m )
{   // f element of B(s,t,m) 
	for each :v: in :fdm:
		g:= :derive:
		h := :restriction:
		F[:v:] := Class( h, s-1, t-1, m-1 )
	return FourierTransform( F )
}
\end{lstlisting}

Applying the invariant  $J$  to the 3828171 Boolean functions of the cover set $\fcsboole(5,6,8)$,
one finds $20694$ distributions that means there are 54 collisions. On the same set, 
the invariant $\widehat J$ takes  $20742$ values : there are only 6 collisions. In the
next section, we describe an equivalence algorithm to detect and solve theses collisions.

\section{Equivalence}

In this section, we work exclusively in the space  $\boole(t-1,t,m)$, i.e. in the particular case $s=t-1$.
Considering $\widehat J$, the invariant corresponding to the values distribution of $\widehat F(f)$. Two functions $f, f' \in \boole(t-1,t,m)$ that do not have the same values distribution are  not equivalent. In the case $f \simbstm(t-1,t,m) f'$, the distributions are identical and there exists $A \in \glm 
$ such that \begin{equation}\label{ADJOINT} F(f')=F(f)\circ A \quad\text{and}\quad \widehat F(f')\circ A^\ast=\widehat F(f).
\end{equation}
The existence of $A$ does not guarantee the equivalence of the functions. Such an $A$ is said a candidate which must be completed by an affine part $a\in \fdm$ to be able to conclude equivalence. For $f\in\rmc(t,m)$ and $x\in \fdm$,
\begin{align*}
 \der(f,u,v)(x)&=\der((\der(f,v)),u)(x)\\
 &= \der((f(x+v)+f(x)),u)\\
 &= f(x+u+v)+f(x+u)+f(x+v)+f(x)\\
 &=f(x+u+v)+f(x)+f(x+u)+f(x)+ f(x+v)+f(x)\\
 &= \der(f,u+v)(x) + \der(f,u)(x)+\der(f,v)(x)\\
\end{align*}
The degree of $\der(f,u,v)$ is less or equal $t-2$, reducing modulo $\rmc(t-2,m)$, we obtain
$$\der(f,u+v)(x) + \der(f,u)(x)+\der(f,v)(x)\equiv 0.$$
The set $\Delta(f)=\lbrace \der(f,v) \mod \rmc(t-2,m) \mid v \in \fdm\rbrace$ is a subspace of $\boole(t-1,t-1,m)$.

\begin{lemma}[Candidate checking]\label{AFFINEQUIV}
 Let $f, f'$ be in $\boole(t-1,t,m)$.
 Let us consider a candidate $A\in \GL(m)$.
 There exists $a\in\fdm$ such that $f'\equiv f\circ (A,a)\mod \rmc(t-2,m)$ if and only if $f'\circ A^{-1}+f \in \Delta(f)$.
\end{lemma}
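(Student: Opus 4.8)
The plan is to characterize exactly when an affine completion $a$ exists for the given candidate linear map $A$, by unwinding what the congruence $f'\equiv f\circ(A,a)\mod\rmc(t-2,m)$ says and relating it to the translation action. First I would write $\agl s=(A,a)$ so that $\agl s(x)=A(x)+a$, and decompose this as the composition of the pure linear map $A$ followed by the translation by $a$. The key observation is that translating the argument of a Boolean function by $a$ changes the function, modulo lower degree terms, by a derivative-type correction: for any $g\in\rmc(t,m)$ one has $g(x+a)+g(x)=\der(g,a)(x)$, which lies in $\rmc(t-1,m)$, and this is precisely the mechanism that will produce the element of $\Delta(f)$.

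Concretely, I would argue as follows. Suppose first that such an $a$ exists, so $f'(x)\equiv f(A(x)+a)\mod\rmc(t-2,m)$. Substituting $x\mapsto A^{-1}(x)$ gives $f'\circ A^{-1}(x)\equiv f(x+a)\mod\rmc(t-2,m)$, since composition with the fixed invertible $A^{-1}$ preserves the degree filtration. Therefore
\begin{align*}
 f'\circ A^{-1}(x)+f(x)&\equiv f(x+a)+f(x)\\
 &= \der(f,a)(x) \mod\rmc(t-2,m).
\end{align*}
By definition $\der(f,a)\mod\rmc(t-2,m)\in\Delta(f)$, so $f'\circ A^{-1}+f\in\Delta(f)$, which is one direction. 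For the converse, if $f'\circ A^{-1}+f\in\Delta(f)$, then by the very definition of $\Delta(f)$ there is some $a\in\fdm$ with $f'\circ A^{-1}+f\equiv\der(f,a)\mod\rmc(t-2,m)$; reading the computation backwards and composing with $A$ recovers $f'\equiv f\circ(A,a)\mod\rmc(t-2,m)$, exhibiting the required affine part.

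The steps I expect to be routine are the substitution $x\mapsto A^{-1}(x)$ and the identity $f(x+a)+f(x)=\der(f,a)(x)$, both of which only use the definitions already set up in the excerpt, together with the fact that precomposition by the fixed automorphism $A^{-1}$ respects reduction modulo $\rmc(t-2,m)$. The one point that genuinely needs the structure established earlier is that $\Delta(f)$ is exactly the set $\{\der(f,a)\bmod\rmc(t-2,m)\mid a\in\fdm\}$, so that membership in $\Delta(f)$ is equivalent to the existence of a single direction $a$ realizing the difference; this is where the computation showing $\der(f,u+v)\equiv\der(f,u)+\der(f,v)$, and hence that $\Delta(f)$ is a subspace closed under these combinations, is used. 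The main obstacle, such as it is, will be bookkeeping the reductions carefully so that the degree drop from $\der(f,a)\in\rmc(t-1,m)$ down to the working modulus $\rmc(t-2,m)$ is handled consistently on both sides, ensuring the equivalence of the two membership statements is genuinely an "if and only if" rather than just one implication.
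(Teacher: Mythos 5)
Your proposal is correct and follows essentially the same route as the paper's own proof: substitute $x\mapsto A^{-1}(x)$ to reduce to $f'\circ A^{-1}+f\equiv\der(f,a)$, identify this with membership in $\Delta(f)$, and reverse the computation for the converse. Your explicit remark that precomposition by $A^{-1}$ preserves the filtration by $\rmc(t-2,m)$ is a detail the paper leaves implicit, but otherwise the arguments coincide.
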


\begin{proof}
 If $f'\equiv f\circ (A,a)\mod \rmc(t-2,m)$, there exists $r\in  \rmc(t-2,m)$ such that for all $x\in\fdm$
 \begin{align*}
  f'(x)&= f\circ (A,a) (x)+r(x)= f(A(x)+a)+r(x)\\
  f'\circ A^{-1}(x)&=f(x+a) +r(x)\\
  f'\circ A^{-1}(x)+f(x)&=f(x+a) +f(x)+r(x)\\
  (f'\circ A^{-1}+f)(x)&=\der(f,a)(x)+r(x)\\
  \end{align*}
Thus $f'\circ A^{-1}+f \in \Delta(f)$.
Conversely, for $f'\circ A^{-1}+f \in \Delta(f)$, there exists $a\in \fdm$ such that $f'\circ A^{-1}+f\equiv \der(f,a) \mod \rmc(t-2,m)$. There exists $r\in  \rmc(t-2,m)$ such that for all $x\in\fdm$, $(f'\circ A^{-1}+f)(x)=\der(f,a)(x)+r(x)$. By repeating the calculations in reverse order, we have $f'\equiv f\circ (A,a)\mod \rmc(t-2,m)$.
\end{proof}

From Lemma \ref{AFFINEQUIV}, one deduces an algorithm \texttt{CandidateChecking(A,f,f')} returning  \texttt{true} if  there exists an element $a\in\fdm$ such that  $f'\equiv f\circ (A,a)\mod \rmc(t-2,m)$, \texttt{false} otherwise.
Given $f,f' \in \boole(t-1,t,m)$ satisfying $\widehat J(f)=\widehat J(f')$, the algorithm \texttt{Equivalent(f,f',iter)}\footnote{the parameter \texttt{iter} ranges from $1024$ to $2^{23}$ depending on the situation} tests in two phases if $f$ and $f'$ are equivalent under the action of $\aglm$ modulo $\rmc(t-2,m)$ :
\begin{enumerate}
	\item determine at most \texttt{iter} candidates $A^\ast\in \GL(m)$ such that $\widehat F(f')\circ A^\ast=\widehat F(f)$
 \item For each candidate $A^\ast$, call \texttt{CandidateChecking(A,f,f')}.
\end{enumerate}
The algorithm ends with one of following three values : 
$$ \texttt{Equivalent($f$,$f'$,iter)}=\begin{cases} 
\texttt{NotEquiv},& \text{all potential  $A$ were tested, so $f\nsimbstm(t-1,t,m) f'$} ; \\
  \texttt{Equiv}, & \text{there exists a $(A,a)$ to prove $f\simbstm(t-1,t,m) f'$} ; \\
  \texttt{Undefined}, & \text{\texttt{iter} is too small to conclude}. \\
\end{cases}$$
 
\begin{center}
	\begin{lstlisting}[style=gcc, xleftmargin={20mm},caption={Equivalence in $\boole(t-1,t,m)$ under the action of $\aglm$ },linewidth={110mm}, emph={pop,push,empty}]
Algorithm  Equivalent(f,:fp:, iter)
{	// f,f' given elements of B(t-1,t,m) 
    // satisfying :finv:
    // return Equiv or NotEquiv or Undefined
	s  :=  random element of AGL(m)
	f  :=  f :rond: s
	basis := :basis:   a basis of :fdm:
	flag := NotEquiv
	// determine :A*: in GL(m)
    :A*:(0) := 0
    Search(1,basis)
    return flag
}
\end{lstlisting}
\end{center}

\begin{center}
	\begin{lstlisting}[style=gcc, xleftmargin={20mm},caption={Search},linewidth={110mm}, emph={pop,push,empty}]
Algorithm  Search(i,basis)
{   // basis=:basis: a basis of :fdm: 
    // i index of basis elements in {1,2,...,m}
    if ( i > m ) 
        // :A*: in GL(m) is fully constructed
        // check the existence of a in :fdm:
        if CandidateChecking(A,f,:fp:) 
            flag := Equiv
            return
        iter := iter - 1
        if ( iter < 0 ) 
            flag := Undefined
            return
    else
        // :state:
        // continue construction of :A*:
        for each y in :fdm:
            if Admissible(y,i) and ( flag= NotEquiv )
                Search(i+1,basis)
\end{lstlisting}

\end{center}

The algorithm \texttt{Admissible(y,i)} checks the possible continuation of the construction of $A^\ast$ over $\langle b_1,\ldots,b_{i-1}, b_i\rangle $, setting $A^\ast(x+b_i):=A^\ast(x)+y$ for all $x\in \langle b_1,\ldots,b_{i-1}\rangle$. Then, the function returns \texttt{true} if $\forall x\in \langle b_1,\ldots,b_{i-1}, b_i\rangle,\  \widehat F(f^{\prime})\circ A^\ast(x)=\widehat F(f)(x)$, and \texttt{false} otherwise.

\section{Determination of $\rho(4,8)$}

\noindent The different steps of our strategy to determine $\rho(4,8)$ are sumarised in \ref{FIG}.

\begin{lstlisting}[style=gcc, xleftmargin={20mm},caption={},linewidth={110mm}, emph={pop,push,empty}]
Algorithm NonLinearity(k,m,f,iter,limit)
{
	G := generator matrix of RM(k,m)
	while ( iter > 0 )
		for( i = 0 ; i < k; i++ )
		do {
			p = random( n ) 
		} while ( not G[i][p] )
		for( j = i+1 ; j < k; j++ )
			if ( G[j][p] ) 
			G[j] := G[j] xor G[i]
		if ( f[ p ] ) 
			f  := f  xor G[i]
		w = weight( f )
		if ( w <= limit ) 
			return true
	iter := iter - 1
	return false
}
\end{lstlisting}

This algorithm proceeds ramdom Gaussian eliminations to generate small weight codewords in a translate of $\rmc(k,m)$. 
To dertermine the covering radii $\rho_6(4,8)$ and $\rho (6,8)$, we have to estimate the nonlinearity of order $4$ of some functions in $\boole(8)$. We use the probabilistic algorithm \texttt{NonLinearity} three times :

\begin{enumerate}
 \item to check the non-existence of function in $\tildeboole(5,6,8)$ of nonlinearity of order 4 greater or equal to $28$ ;
 \item to extract the set of two functions $\{f,g\}$ in $\tildeboole(5,6,8)$ with nonlinearity of order $4$ greater or equal to $26$ ;
 \item to prove the nonlinearity of order 4 of the functions $\{f+\delta_a, g+\delta_a\}$ is not greater or equal to $27$.
\end{enumerate}



\subsection{Compute $\rho_6(4,8)$}
Recall that $$\rho_6(4,8) = \max_{f\in \tildeboole(5,6,8)} \NL_{4} (f)= \max_{f\in \tildeboole(5,6,8)}\min_{g\in\rmc(4,8)} \wt(f+g).$$
We apply the algorithm \texttt{NonLinearity} to $\tildeboole(5,6,8)$ to confirm that all these functions have a nonlinearity of order $4$ less or equal to $26$. Using the result $\rho_5(4,8)=26$ of \cite{RANDALL}, we obtain $\rho_6(4,8)=26$.
 
\subsection{Compute $\rho(4,8)$}

 Knowing that $\rho(6,8)=2$ and from the previous result of $\rho_6(4,8) =26$, we have  $$\rho(4,8)\leq \rho_6(4,8) + \rho(6,8) =28.$$
 
A second application of the algorithm \texttt{NonLinearity} eliminates from $\tildeboole(5,6,8)$ $20746$ functions of nonlinearity of order 4 less than 26.  After this process, there are two remaining functions :
\begin{align*}
 f =
abcef+acdef+abcdg&+abdeg+abcfg+acdeh+abcfh\\
&+bdefh+bcdgh+abegh+adfgh+cefgh
\end{align*}
and 
$$g=abcdeh+abcdf+abcef+abdeg+bcefh+adefh+bcdgh+acegh+abfgh
$$
We retrieve the cocubic function $f$, mentioned in \cite{RANDALL}, its degree is $5$ and its nonlinearity of order 4 is $26$. The other function $g$ has degree $6$ and its nonlinearity is probabily $26$ and certainly less or equal to $26$.
Now, we are going to prove that there is no Boolean function in $\boole(8)$ with a nonlinearity of order 4 equal to $28$. For this purpose, it is sufficient to check the non-existence of a function $h$ satisfying $\NL_4(h)=27$, such a $h$ has an odd weight and  therefore its degree is $8$.
For $ a\in \fdm$, we denote by  $\delta_a$ the Dirac function, $\delta_a(x)=1$ iff $x=a$. Every Boolean function can be expressed by a sum of Dirac $f(x)=\sum_{\{a| f(a)=1\}} \delta_a(x)$. The polynomial form  of $\delta_a$ is :
\begin{equation}
\label{DELTA}
\delta_a(X_1,X_2, \ldots, X_m)=(X_1+\bar a_1)(X_2+\bar a_2)\cdots (X_m+\bar a_m)
\end{equation}
                                                                    where $\bar a_i=a_i+1$. 
\begin{lemma}
 An odd weight function is at distance one from $\rmc(m-2,m)$.
\end{lemma}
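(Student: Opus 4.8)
The plan is to prove both inequalities: that an odd-weight $f$ lies at distance at least one, and at most one, from $\rmc(m-2,m)$.

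For the lower bound, I would first recall that the coefficient of the top monomial $X_{\{1,\ldots,m\}}$ in the algebraic form $f=\sum_S a_S X_S$ equals $\sum_{x\in\fdm} f(x)=\wt(f)\bmod 2$. Hence $f$ has odd weight if and only if $\deg f=m$, so in particular $f\notin\rmc(m-1,m)\supseteq\rmc(m-2,m)$ and the distance of $f$ to the code is at least $1$.

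For the upper bound, the idea is to produce a single Dirac function $\delta_a$ with $f+\delta_a\in\rmc(m-2,m)$; then, setting $g:=f+\delta_a$, one has $g\in\rmc(m-2,m)$ and $f+g=\delta_a$, so $\wt(f+g)=1$ exhibits a codeword at distance $1$. Reducing modulo $\rmc(m-2,m)$, only the monomials of degree $m$ and $m-1$ survive, so I would expand the polynomial form (\ref{DELTA}) and read off
\[
\delta_a \equiv X_{\{1,\ldots,m\}} + \sum_{j=1}^{m} \bar a_j\, X_{\{1,\ldots,m\}\setminus\{j\}} \pmod{\rmc(m-2,m)},
\]
since selecting $X_i$ from every factor except the $j$-th and the constant $\bar a_j$ from the $j$-th factor yields exactly the degree-$(m-1)$ term $\bar a_j\prod_{i\neq j}X_i$. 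Thus, modulo $\rmc(m-2,m)$, a function is determined by its top coefficient together with its $m$ degree-$(m-1)$ coefficients.

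It then remains to match coefficients. By the odd-weight hypothesis the top coefficient of $f$ is $1$, which already agrees with that of $\delta_a$; and choosing $a\in\fdm$ so that each $\bar a_j$ equals the coefficient of $X_{\{1,\ldots,m\}\setminus\{j\}}$ in $f$ makes $f$ and $\delta_a$ agree modulo $\rmc(m-2,m)$, whence $f+\delta_a\in\rmc(m-2,m)$. Since $a\mapsto\bar a$ is a bijection of $\fdm$, such an $a$ exists (and is in fact unique). Combining the two bounds gives $\NL_{m-2}(f)=1$. The only delicate point is the coefficient bookkeeping in the expansion of $\delta_a$; once that is in place the matching argument is immediate, so I expect no genuine obstacle.
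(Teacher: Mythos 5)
Your proof is correct and follows essentially the same route as the paper's: expand $\delta_a$ to see that its degree-$m$ and degree-$(m-1)$ coefficients are $1$ and $\bar a_1,\ldots,\bar a_m$, then choose $a$ so that these match the corresponding coefficients of the odd-weight function, giving $f\equiv\delta_a \bmod \rmc(m-2,m)$. The only difference is that you also spell out the lower bound (distance $\geq 1$ because odd weight forces degree $m$), which the paper leaves implicit.
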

\begin{proof}
We denote $\widetilde{X_i}$ the monomial term of degree $m-1$ with all variables except $X_i$. Let us consider an odd weight function $h\in \boole(m)$, its degree is $m$, so
$$h(X_1,X_2, \ldots, X_m)= X_1X_2\ldots X_m+ \bar a_1\widetilde{X_1}+\cdots+ \bar a_m\widetilde{X_m} + r(x)$$
 where $\deg(r) \leq m-2$. From (\ref{DELTA}), we also have
$$\delta_a(X_1,X_2, \ldots, X_m)= X_1X_2\ldots X_m+ \bar a_1\widetilde{X_1}+\cdots+ \bar a_m\widetilde{X_m} + r'(x)$$ with $\deg(r')\leq m-2$. We obtain $h\equiv \delta_a \mod \rmc(m-2,m)$. The Dirac function has weight 1,  so the distance of $h$ to $\rmc(m-2,m)$ is 1. 
\end{proof}

A third application of the algorithm \texttt{NonLinearity} to the set $\{f,g\}$ translated by the $256$ Dirac functions give the non-existence of odd weight functions of nonlinearity of order 4 greater or equal to $27$. That means there is no function in $\boole(8)$, with nonlinearity of order 4 greater or equal to $27$  and we obtain $\rho(6,8)=26$.
The second and third applications of the algorithm \texttt{NonLinearity} need $569713$ iterations.

\begin{remark}
The extraction of 20748 classes of $\tildeboole (5,6,8)$ with invariant approach and \texttt{equivalent} algorithm needs several weeks of computation (equivalence test)
\end{remark}

\begin{remark} The number of iterations to estimate the 4th order nonlinearity of Boolean functions 565252 in average. The total running time to check the nonlinearity is about one day using 48  processors.
 
\end{remark}

\begin{figure}[h]
\centering
\begin{tikzpicture}[scale=0.70,transform shape]
 
\node[draw,rectangle split, rectangle split parts=2, rectangle split part fill={red!50,red!20}, text width=4cm,align=center,rounded corners] (dep) at (0,0)
{ $\boole(5,6,8)$
\nodepart{two}
$\boole(4,5,7) \times \boole(5,6,7)$
dimension 84 \\
cardinality = $2^{84}$};

%

\node[draw,rectangle split, rectangle split parts=2, rectangle split part fill={red!50,red!20}, text width=4cm,align=center,rounded corners] (ics) at (+7,0)
{ $\icsboole(5,6,8)$
\nodepart{two}
initial cover set\\
$\tildeboole(4,5,7) \times \boole(5,6,7)$\\
 card $= 179 \times 2^{28} \approx 2^{35.5}$};
\draw[-stealth,Red!80,line width=1pt] (dep.east)--(ics.west) node[sloped,midway,above]{first reduction}node[sloped,midway,below]{Lemma \ref{FIRST} };

%

\node[draw,rectangle split, rectangle split parts=2, rectangle split part fill={red!50,red!20}, text width=4cm,align=center,rounded corners] (fcs) at (+14,0)
{ $\fcsboole(5,6,8)$
\nodepart{two}
second cover set\\
card = $3828171$\\\phantom{card}$\quad\approx 2^{21.9}$};
 \draw[-stealth,Red!80,line width=1pt] (ics.east)--(fcs.west) node[sloped,midway,above]{stabilizers action}node[sloped,midway,below]{Lemma \ref{COVER}\&\ref{SECOND} };

%

\node[draw,rectangle split, rectangle split parts=2, rectangle split part fill={red!50,red!20}, text width=4cm,align=center,rounded corners] (inveq) at (+14,-4)
{ $\tildeboole(5,6,8)$
\nodepart{two}
representatives set\\
card = $20748$};
 \draw[-stealth,red!50,line width=1pt] (fcs.south)--(inveq.north) node[sloped,midway,above]{invariant }node[sloped,midway,below]{equivalence};

\node[draw,rectangle split, rectangle split parts=2, rectangle split part fill={Navy!30,Blue!20}, text width=4cm,align=center,rounded corners] (nl26) at (+7,-4)
{$\{f,g\}$
\nodepart{two}
only 2 functions
have  probably $\NL_4\geq 26$\\ 20746 have $\NL_4< 26$};
\draw[-stealth,Navy,line width=1pt] (inveq.west)--(nl26.east) node[sloped,midway,above]{(2) Nonlinearity }node[sloped,midway,below]{$\geq 26$};

\node[draw,rectangle split, rectangle split parts=2, rectangle split part fill={Navy!30,Blue!20}, text width=4cm,align=center,rounded corners] (oddw) at (0,-4)
{ $\{f+\delta_a, g+\delta_a\}$
\nodepart{two}
$\delta_a$ Dirac function, $a \in \fdm$\\
 512 odd weight functions};
\draw[-stealth,Navy,line width=1pt] (nl26.west)--(oddw.east) node[sloped,midway,above]{Dirac translation};

\node[draw,rectangle split, rectangle split parts=2, rectangle split part fill={ForestGreen!50,ForestGreen!20}, text width=4cm,align=center,rounded corners] (rho) at (0,-8)
{ $\rho(4,8)=26$
\nodepart{two}
 No odd weight function with  $\NL_4\geq 27$};
\draw[-stealth,Navy,line width=1pt] (oddw.south)--(rho.north) node[sloped,midway,above]{\vbox{\hbox{(3)} \hbox{{Nonlinearity}}}}node[sloped,midway,below]{$\geq 27$};
 \node[draw,rectangle split, rectangle split parts=2, rectangle split part fill={ForestGreen!50,ForestGreen!20}, text width=4cm,align=center,rounded corners] (rho6) at (+14,-8)
{$\rho_6(4,8)=26$
 \nodepart{two}
 No function with $\NL_4\geq 28$
 in $\rmc(6,8)$
 };
 \draw[-stealth,Navy,line width=1pt] (inveq.south)--(rho6.north)
 node[sloped,midway,above]{\vbox{\hbox{(1)} \hbox{{Nonlinearity}}}}
 node[sloped,midway,below]{$\geq 28$};

%
%
%
%
\end{tikzpicture}
\caption{Strategy to compute $\rho(4,8)$}
\label{FIG}
\end{figure}
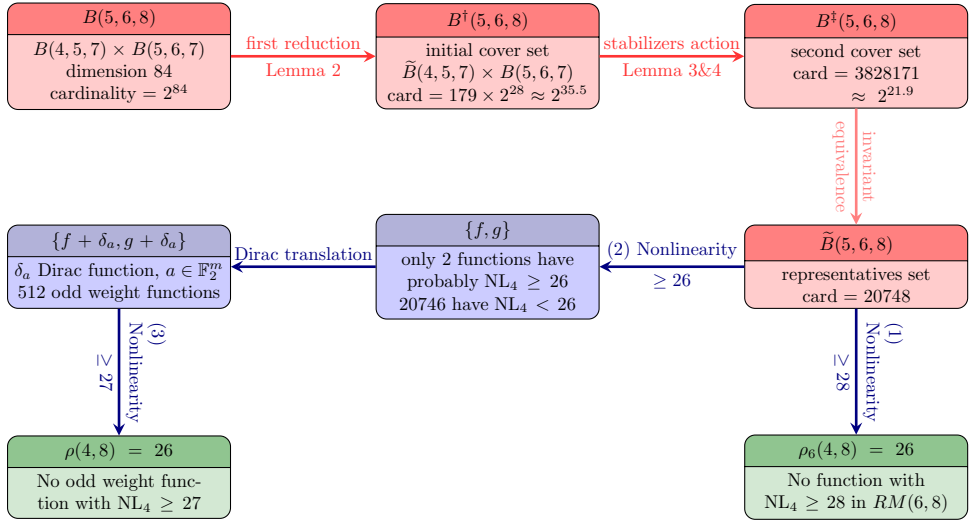

\section{Conclusion}
We have determine the covering radius of $\rmc(4,8)$ from the classification of $\boole(5,6,8)$. It is not obvious how to apply our method to obtain the covering radii of the second and third order Reed-Muller in $8$ variables. However, we believe that our approach can help to improve lower bounds in these open cases. 
\bibliographystyle{plain} 

\end{document}